\newcommand{\IZ}{{\mathbb{Z}}}
\newcommand{\fp}{{\mathfrak{p}}}     
\newcommand{\cO}{{\mathcal{O}}}
\DeclareMathOperator{\End}{End}               
\DeclareMathOperator{\Hom}{Hom}             
\DeclareMathOperator{\Ker}{Ker}
\DeclareMathOperator{\GL}{GL}
\DeclareMathOperator{\chap}{Cap}
\newcommand{\lconj}[2]{\,^{#1}\!#2}                     
\let\lra=\longrightarrow
\let\wh=\widehat
\newtheorem{thm}{Theorem}[section]
\newtheorem{lem}[thm]{Lemma}
\newtheorem{cor}[thm]{Corollary}
\theoremstyle{definition}
\theoremstyle{remark}
\newtheorem{rem}[thm]{Remark}
\begin{document}


\title[On the lifting of the Dade Group]
      {On the Lifting of the Dade Group}

\date{\today}

\author{Caroline Lassueur and Jacques Th\'{e}venaz}
\address{Caroline Lassueur\\ FB Mathematik, TU Kaiserslautern, Postfach 3049, 67653 Kaisers\-lautern, Germany.}
\email{lassueur@mathematik.uni-kl.de}
\address{Jacques Th\'{e}venaz\\ EPFL, Section de Math\'{e}matiques, Station 8, CH-1015 Lausanne, Switzerland.}
\email{jacques.thevenaz@epfl.ch}

\keywords{Endo-permutation, Dade group, reduction modulo $p$, lifting of modules.}

\subjclass[2010]{Primary 20C20.}

\begin{abstract}
For the group of endo-permutation modules of a finite $p$-group, there is a surjective reduction  homomorphism from a complete discrete valuation ring of characteristic~0 to its residue field of characteristic~$p$.
We prove that this reduction map always has a section which is a group homomorphism.
\end{abstract}

\maketitle


\pagestyle{myheadings}
\markboth{C. Lassueur and J. Th\'{e}venaz}{On the lifting of the Dade group}


\section{Introduction}

\noindent The Dade group and endo-permutation modules are important invariants of block theory of finite groups. For instance, they occur in the description of source algebras of blocks (see e.g. \cite[\S50]{ThevenazBook} or \cite[\S50]{CEKL}), or as sources of simple modules for $p$-soluble groups (see e.g. \cite[\S30]{ThevenazBook}). They also play an important role in the description of equivalences between block algebras, such as derived equivalences in the sense of Rickard or Morita equivalences (see e.g. the recent papers \cite{KL17,BKL18}).  The final classification of endo-permutation modules was obtained by Bouc in~\cite{Bouc}, but one last question about the structure of the Dade group remained open, namely whether lifting endo-permutation modules from positive characteristic to characteristic zero can be turned into a group homomorphism. The aim of this note is to fill this gap.
\par
Throughout $p$ denotes a prime number, $P$ a finite $p$-group, and $\cO$ a complete discrete valuation ring of characteristic~$0$ containing a root of unity of
order $\exp(P)$, the exponent of~$P$, with a residue field $k:=\cO/\fp$ of characteristic~$p$, where $\fp=J(\cO)$ is the unique maximal ideal of $\cO$.
We let $R\in\{\cO,k\}$.
All modules considered are assumed to be finitely generated left modules, and we  will consider $\cO P$-lattices only, that is $\cO P$-modules which are free as $\cO$-modules.   For an $\cO P$-lattice $L$, the reduction modulo $\fp$ of $L$ is the $kP$-module $L/\mathfrak{p}L$\,, and a $kP$-module $M$ is said to be \emph{liftable} if there exists an $\cO P$-lattice $\wh{M}$ such that $M\cong \wh{M}/\fp\wh{M}$. 
\par
Very few classes of modules are known to be liftable from $k$ to $\cO$ in general. However, it is known that any endo-permutation $kP$-module can be lifted to an endo-permutation $\cO P$-lattice.
This nontrivial result is a consequence of their classification, due to Bouc~\cite{Bouc}. 
Let us fix some more precise notation.
Let $D_R(P)$ denote the group of endo-permutation $RP$-lattices (i.e. the Dade group of $P$).
The reduction homomorphism modulo~$\fp$
$$\pi_p : D_\cO(P) \longrightarrow D_k(P)$$
maps the equivalence class of an endo-permutation $\cO P$-lattice to the equivalence class of its reduction modulo~$\fp$.
By a main result of Bouc \cite[Corollary~8.5]{Bouc}, the map $\pi_p$ is surjective.
Moreover, its kernel is isomorphic to the group $X(P)$ of one-dimensional $\cO P$-lattices (see Lemma~\ref{lem:one-dim}).
The aim of this note is to prove that this reduction map always admits a section which is a group homomorphism.

\begin{thm} \label{main} Let $P$ be a finite $p$-group.
\begin{enumerate}[ \rm(a)]
\item The group homomorphism $\pi_p : D_\cO(P) \to D_k(P)$ has a group-theoretic section.
\item $D_\cO(P) \cong X(P) \times D_k(P)$.
\end{enumerate}
\end{thm}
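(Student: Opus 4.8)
The plan is to derive part~(b) from part~(a), and to prove~(a) by analysing the abelian group extension it amounts to. Part~(b) is purely formal: if $s\colon D_k(P)\to D_\cO(P)$ is a group-theoretic section of $\pi_p$, then, all groups being abelian, $D_\cO(P)=\Ker(\pi_p)\oplus s\bigl(D_k(P)\bigr)$ with $s$ injective, and since $\Ker(\pi_p)\cong X(P)$ by Lemma~\ref{lem:one-dim} this reads $D_\cO(P)\cong X(P)\times D_k(P)$. So everything rests on producing the section.

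The first step I would take is to cut the problem down to the torsion subgroups. As $\cO$ is large enough, $X(P)\cong\Hom(P,\cO^{\times})$ is a finite abelian $p$-group; as $\pi_p$ is onto with finite kernel, $D_\cO(P)$ and $D_k(P)$ are finitely generated abelian of equal rank, and a lift of a torsion class is again torsion (a multiple of it lands in the finite group $X(P)$). Hence $\pi_p$ restricts to a surjection $D_\cO^{t}(P)\twoheadrightarrow D_k^{t}(P)$ of torsion subgroups with kernel $X(P)$, and, using that $D_k(P)/D_k^{t}(P)$ is free, one checks that $\pi_p$ admits a group-theoretic section if and only if the sequence $0\to X(P)\to D_\cO^{t}(P)\to D_k^{t}(P)\to 0$ splits; equivalently, its class in $\Ext_{\IZ}^1\bigl(D_k^{t}(P),X(P)\bigr)$ vanishes, and since $X(P)$ is a $p$-group only the $p$-primary part of $D_k^{t}(P)$ is relevant.

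For $p$ odd there is in fact nothing to prove: by Bouc's classification \cite{Bouc} the torsion subgroup $D_k^{t}(P)$ is a finite $2$-group, so it has no $p$-torsion and the $\Ext$-group above is zero. One can even display the section concretely. Represent a class $x\in D_k(P)$ by its cap $M_x$, an indecomposable endo-permutation $kP$-module with vertex~$P$; as is standard for caps, $\dim_k M_x$ is prime to $p$, because $\End_k(M_x)\cong M_x\otimes_k M_x^{*}$ is a permutation module whose only trivial summand is spanned by the identity, so $(\dim_k M_x)^2\equiv 1\pmod p$. By Bouc's lifting theorem $M_x$ lifts to an endo-permutation $\cO P$-lattice, and since $\dim_k M_x$ is invertible modulo $|X(P)|$ there is a unique such lift $\wh{M_x}$ with $\Lambda^{\mathrm{top}}\wh{M_x}\cong\cO$; put $s(x):=[\wh{M_x}]$. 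Additivity of $s$ follows from $\Lambda^{\mathrm{top}}(\wh M\otimes\wh N)\cong\cO$ together with the fact that for $p$ odd every permutation $\cO P$-lattice has trivial top exterior power, so the normalisation is preserved under passage to caps.

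The case $p=2$ is where the work lies, and it is the main obstacle. Now $X(P)$ and $D_k^{t}(P)$ are both non-trivial $2$-groups, so the extension class is a genuine potential obstruction; in particular the determinant normalisation above breaks down because permutation $2$-lattices can carry a non-trivial sign character, so tensor products of trivial-determinant lifts need not stay normalised once permutation summands are split off. To overcome this I would invoke Bouc's classification \cite{Bouc} in full strength: it describes $D_k(P)$ — and with it $D_\cO(P)$ and the reduction $\pi_p$ — in terms of explicit data attached to the Roquette sections $N_P(Q)/Q$ of $P$ (cyclic and generalized quaternion $2$-groups) together with a few small configurations, all linked by the natural restriction and deflation operations, the image being cut out by explicit compatibility conditions. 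For each such building block the groups $D_\cO$, $D_k$ and the reduction between them are completely known, so that a section exists locally; the remaining and genuinely delicate point is to choose the local sections simultaneously so that they commute with all the restriction and deflation maps relating the blocks, whence they glue to an honest section of $\pi_p\colon D_\cO(P)\to D_k(P)$. Verifying this coherence — that one can arrange the section to respect the compatibility conditions defining $D_k(P)$ inside the product over the building blocks — is the crux of the proof.
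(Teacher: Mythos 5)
Your reduction of (b) to (a), your reformulation of the splitting of $\pi_p$ as the vanishing of a class in $\Ext^1_{\IZ}\bigl(D_k^t(P),X(P)\bigr)$ to which only the $p$-primary part of $D_k^t(P)$ can contribute, and your treatment of odd $p$ are all correct. For $p$ odd your $\Ext$ argument (using that $D_k^t(P)$ is a $2$-group while $X(P)$ is a $p$-group) is even a little slicker than the paper's Lemma~\ref{odd}, which instead builds the section explicitly from the determinant-$1$ lift $\Phi_M$; your normalisation $\Lambda^{\mathrm{top}}\wh{M_x}\cong\cO$ is that same condition.

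The gap is the case $p=2$, which is the entire content of the theorem beyond known results, and your proposal stops exactly where the work begins: you correctly identify that a genuinely non-trivial extension of $2$-groups must be split, propose to glue local sections over the Roquette sections of $P$ compatibly with restriction and deflation, and then declare that verifying this coherence ``is the crux'' --- with no argument that the obstruction vanishes. That gluing problem is precisely the hard part of your route (the compatibility conditions cutting out $D_k(P)$ inside the product over sections are exactly what would obstruct a simultaneous choice), and the paper avoids it entirely. The two inputs you are missing are: (i) the structural fact that $D_k(P)\cong(\IZ/2\IZ)^a\times(\IZ/4\IZ)^b\times\IZ^c$, so the torsion has exponent dividing $4$; and (ii) the observation that the determinant-$1$ lift, although it need not define a homomorphism on all of $D_k(P)$ when $p=2$, does preserve the orders $2$ and $4$: if $[M]^2=1$ then $M\cong M^*$, hence $\Phi_M\cong\Phi_{M^*}\cong\Phi_M^*$ by Lemma~\ref{Phi}(a) and $[\Phi_M]^2=1$; if $[M]$ has order $4$ one applies this to $M\otimes_k M$ and uses $\Phi_{M\otimes_k M}\cong\Phi_M\otimes_\cO\Phi_M$ to conclude $[\Phi_M]^4=1$ (Lemma~\ref{order2}). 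One then lifts a generator of each cyclic factor to an element of the same order and extends multiplicatively. Without (i) and (ii), or a completed gluing argument, your proof of (a) for $p=2$ is not there.
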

By the above, it is clear that (b) follows from~(a), so we only have to prove~(a). In other words, we have to show how to choose the lifts of all capped endo-permutation $kP$-module in a suitable fashion. When $p$ is odd, the result is easy and does not require any other deep result about endo-permutation modules. We will briefly recall this construction in Lemma~\ref{odd}. Thus the main question is to deal with the case of 2-groups in characteristic~2.  Furthermore, we will explain in Remark~\ref{rem:DadePalg} that, as a consequence of the surjectivity of~$\pi_p$, the result is equivalent to another result in terms of Dade $P$-algebras  mentioned without proof in \cite[Remark~29.6]{ThevenazBook}.  
\par
In fact, our aim is not only to prove that $\pi_p$ always admits a group-theoretic section, but also more accurately to describe how to define a section in a natural way on a set of generators of~$D_k(P)$.

\bigskip\bigskip
\section{Endo-permutation lattices and the Dade group}
\noindent
We start by recalling some basic facts about endo-permutation modules and the Dade group. We refer to the survey~\cite{ThevenazTour} for more details and suitable references.\par

An $RP$-module $M$ is called \emph{endo-permutation}  if its endomorphism algebra $\End_R(M)$ is a permutation $RP$-module,
where $\End_R(M)$ is endowed with its natural $RP$-module structure via the action of~$P$ by conjugation: 
$$\lconj{g}{\phi}(m)=g\cdot\phi(g^{-1}\cdot m)\quad \forall\,g\in P, \; \forall\, \phi\in \End_R(L)\text{ and }\forall\,m\in M \,.$$
Notice that, if $R=\cO$, then it is easy to see that any endo-permutation $\cO P$-module is necessarily free as an $\cO$-module, i.e. an $\cO P$-lattice, because $\cO$ is a PID.
Hence, in the sequel, we consider $RP$-lattices only.
In particular, the \emph{dimension} $\dim_R M$ of an $RP$-lattice~$M$ is the rank of $M$ viewed as a free $R$-module.
Moreover, writing  $M^*=\Hom_R(M,R)$ for the dual of the $RP$-lattice $M$, we have $\End_R(M)\cong M\otimes_R M^*$ as $RP$-lattices.  

\par
An endo-permutation $RP$-lattice $M$ is said to be \emph{capped} if it has at least one indecomposable direct summand with vertex~$P$, and in this case there is in fact a unique isomorphism class of indecomposable direct summands of $M$ with vertex~$P$, called the \emph{cap} of~$M$ and denoted by $\chap(M)$. Unfortunately, $\chap(M)$ may appear with a multiplicity as a direct summand of~$M$ and we shall need to avoid this.
Following~\cite[Definition~5.3]{Las}, we say that an endo-permutation $RP$-lattice $M$ is \emph{strongly capped} if it is capped and if $\chap(M)$ has multiplicity one as a direct summand of~$M$.
We note that the class of strongly capped endo-permutation $RP$-lattices is closed under taking duals and tensor products (see \cite[Lemma~5.4]{Las}).
\par
A strongly capped endo-permutation $RP$-lattice $M$ has dimension $\dim_R(M)$ prime to~$p$, because $\dim_R(\chap(M))$ is prime to~$p$
(see \cite[Corollary~28.11]{ThevenazBook}) and any other direct summand has dimension divisible by~$p$ since its vertex is strictly smaller than~$P$.
This is the reason why we shall use strongly capped endo-permutation $RP$-lattices rather than capped endo-permutation $RP$-lattices.
\par
Two capped endo-permutation $RP$-lattices $M$ and $N$ are \emph{equivalent} if there exist two capped permutation $RP$-lattices $S$ and $T$
such that $M\otimes_R S\cong N\otimes_R T$.
This happens if and only if $\chap(M)\cong\chap(N)$ and this defines an equivalence relation.
The Dade group is usually defined as the set of equivalence classes $[M]$ of capped endo-permutation $RP$-lattices~$M$.
Any class contains, up to isomorphism, a unique indecomposable endo-permutation $RP$-lattice, namely the cap of any element of the class.
This indecomposable endo-permutation $RP$-lattice is of course strongly capped, so that we can define the Dade group by restricting each class and using only strongly capped endo-permutation $RP$-lattices.
We do so and define the Dade group $D_R(P)$ as the set of equivalence classes $[M]$ of strongly capped endo-permutation $RP$-lattices~$M$,
endowed with the product $[M]\cdot[N] = [M\otimes_R N]$ induced by the tensor product over $R$. (This definition coincides with \cite[Corollary-Definition 5.5]{Las}.)
\par
We note that two strongly capped endo-permutation $RP$-lattices $M$ and $N$ are equivalent if there exist two strongly capped permutation $RP$-lattices $S$ and $T$ such that $M\otimes_R S\cong N\otimes_R T$.
The identity element is the class~$[R]$ of the trivial $RP$-lattice~$R$ and $[R]$ consists of all strongly capped permutation $RP$-lattices  (i.e. permutation $RP$-lattices $RX$, where $X$ is a basis of~$RX$ permuted by the action of~$P$, having a trivial direct summand~$R$ with multiplicity one, corresponding to a unique fixed point in~$X$).
If $L$ is an indecomposable endo-permutation $RP$-lattice, it is the cap of any element of its class and an arbitrary element of the class $[L]$ has the form $\chap(L)\otimes_R RX$ where $RX$ is a strongly capped permutation $RP$-lattice. 
\par
An $RP$-lattice $M$ is called \emph{endo-trivial} if $\End_R(M)\cong R\oplus Q$, where $Q$ is a projective $RP$-lattice (or equivalently a free $RP$-lattice because $P$ is a $p$-group).
Clearly any endo-trivial $RP$-lattice $M$ is endo-permutation and moreover
$$T_R(P):=\{[M]\in D_R(P)\mid \chap(M)\text{ is endo-trivial}\}$$
 is a subgroup of $D_R(P)$. For simplicity, $T_R(P)$ is called here the group of endo-trivial $RP$-lattices (but it is only isomorphic to the usual group of endo-trivial $RP$-lattices).
 \par 
We recall that the Dade group is known to be a finitely generated abelian group, hence a product of cyclic groups. The most important examples of indecomposable endo-permutation $RP$-lattices are given by the relative Heller translates of the trivial $RP$-lattice, which we denote by  $\Omega^m_{P/Q}(R)$ ($m\in\IZ$, $Q\leq P$) as usual, and simply write $\Omega^m(R)$ when $Q=1$.
In fact, when $p$ is odd, $D_k(P)$ is generated by such modules, by \cite[Theorem~7.7]{Bouc}.\par

The group homomorphism $\pi_p:D_\cO(P) \to D_k(P)$ was defined in the introduction and we recall that it is surjective by \cite[Corollary~8.5]{Bouc}.

\begin{lem}\label{lem:one-dim}
The kernel of $\pi_p:D_\cO(P) \to D_k(P)$ is isomorphic to the group $X(P)$ of one-dimensional $\cO P$-lattices.
\end{lem}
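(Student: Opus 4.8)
\emph{Proof proposal.} The plan is to exhibit $X(P)$ as a subgroup of $D_\cO(P)$ and prove it equals $\ker\pi_p$. A one‑dimensional $\cO P$‑lattice is of the form $\cO_\lambda$ for a homomorphism $\lambda\colon P\to\cO^\times$, and $\End_\cO(\cO_\lambda)\cong\cO$ as the trivial $\cO P$‑module, so $\cO_\lambda$ is endo‑trivial; under $\otimes_\cO$ these lattices form a group isomorphic to $\Hom(P,\cO^\times)$. Since $\cO_\lambda$ is indecomposable with vertex $P$ — otherwise, by the projection formula, $\cO\cong\End_\cO(\cO_\lambda)$ would be relatively $Q$‑projective for some $Q<P$, which is impossible — we have $\chap(\cO_\lambda)=\cO_\lambda$, so $\cO_\lambda\mapsto[\cO_\lambda]$ defines an injective group homomorphism $X(P)\hookrightarrow D_\cO(P)$ landing in $T_\cO(P)$. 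Moreover $\cO_\lambda/\fp\cO_\lambda\cong k_{\ol\lambda}$ with $\ol\lambda\colon P\to k^\times$; as $P$ is a $p$‑group and $k^\times$ has no nontrivial element of $p$‑power order, $\ol\lambda$ is trivial, so $\pi_p([\cO_\lambda])=[k]=0$. Hence $X(P)\subseteq\ker\pi_p$.

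The core of the argument is the reverse inclusion $\ker\pi_p\subseteq X(P)$. Given $[M]\in\ker\pi_p$, I would pass to the cap $L:=\chap(M)$, an indecomposable endo‑permutation $\cO P$‑lattice of vertex $P$ with $[L/\fp L]=\pi_p([M])=0$, and aim to show that $L$ is one‑dimensional. The key claim is that $L/\fp L$ is indecomposable. To prove it, observe that $\End_\cO(L)$ is a permutation $\cO P$‑lattice (since $L$ is endo‑permutation), say $\End_\cO(L)\cong\cO Z$ for a $P$‑set $Z$; reducing mod $\fp$ gives the permutation module $\End_k(L/\fp L)\cong kZ$. Taking $P$‑fixed points for the conjugation action, $\End_{\cO P}(L)\cong(\cO Z)^P$ is $\cO$‑free of rank equal to the number of $P$‑orbits on $Z$, while $\End_{kP}(L/\fp L)\cong(kZ)^P$ has the same $k$‑dimension. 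An $\cO$‑linear $P$‑equivariant endomorphism of $L$ with image contained in $\fp L$ already lies in $\fp\,\End_{\cO P}(L)$, so the reduction map $\End_{\cO P}(L)\to\End_{kP}(L/\fp L)$ has kernel exactly $\fp\,\End_{\cO P}(L)$; it therefore induces an injection, hence — by the dimension count — an isomorphism $\End_{\cO P}(L)/\fp\,\End_{\cO P}(L)\cong\End_{kP}(L/\fp L)$. As $\cO$ is complete and $L$ is indecomposable, $\End_{\cO P}(L)$ is local, so its quotient $\End_{kP}(L/\fp L)$ is local and $L/\fp L$ is indecomposable.

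With indecomposability in hand the proof finishes quickly. Since the reduction of a capped endo‑permutation lattice is again capped, $L/\fp L$ is a capped endo‑permutation $kP$‑module; being indecomposable it has vertex $P$, hence $\chap(L/\fp L)=L/\fp L$, and $[L/\fp L]=0$ forces $\chap(L/\fp L)\cong k$. Thus $L/\fp L\cong k$, so $\dim_\cO L=\dim_k(L/\fp L)=1$, i.e.\ $L$ is a one‑dimensional $\cO P$‑lattice and $[M]=[L]\in X(P)$. This yields $\ker\pi_p=X(P)$, and combined with the first paragraph the assignment $\cO_\lambda\mapsto[\cO_\lambda]$ is the desired isomorphism.

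I expect the only real difficulty to be the key claim that the reduction of an indecomposable endo‑permutation lattice stays indecomposable: this fails for lattices in general, and the argument genuinely relies on the endomorphism algebra of an endo‑permutation lattice being a permutation lattice, which is what forces the $\cO$‑rank of $\End_{\cO P}(L)$ to coincide with $\dim_k\End_{kP}(L/\fp L)$. The remaining ingredients — the group structure of $X(P)$, the vanishing of $\ol\lambda$, and the endgame with caps and vertices — are routine.
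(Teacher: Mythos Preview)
Your proof is correct and follows the same route as the paper: pass to the indecomposable cap $L$, show its reduction $L/\fp L$ is still indecomposable and hence equals the trivial module, and handle the converse via the absence of nontrivial $p$-power roots of unity in~$k^\times$. The paper's version is a two-line sketch that takes the indecomposability of $L/\fp L$ for granted (it is a standard fact for endo-permutation lattices), whereas you supply the full argument via the permutation structure on $\End_\cO(L)$; your extra detail is sound and exactly what underlies the paper's implicit step.
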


\begin{proof}
If $L$ is an indecomposable endo-permutation $\cO P$-lattice with $[L]\in \Ker(\pi_p)$, then $\dim_k(L/\fp L)=1$, hence $\dim_\cO(L)=1$.
If, conversely, $\dim_\cO(L)=1$, then the one-dimensional $kP$-module $L/\fp L$
must be trivial since there are no nontrivial $p^n$-th roots of unity in the field~$k$ of characteristic~$p$. Therefore $[L]\in \Ker(\pi_p)$.
\end{proof}

\begin{rem} \label{rem:DadePalg}
The endomorphism algebra $\End_R(M)$ of an endo-permutation $RP$-lattice~$M$ is naturally endowed with the structure of a so-called \emph{Dade $P$-algebra}, that is, an $\cO$-simple permutation $P$-algebra whose Brauer quotient with respect to $P$ is nonzero. Furthermore, there exists also a version of the Dade group, denoted by $D^{alg}_R(P)$, obtained by defining an equivalence relation on the class of all Dade $P$-algebras rather than capped endo-permutation $RP$-lattices, where multiplication is given by the tensor product over $R$. We refer to \cite[\S28-29]{ThevenazBook} for this construction. This induces  a canonical homomorphism
$$d_R:D_R(P)\lra D^{alg}_R(P) \,, \qquad [M]\mapsto [\End_R(M)]\,,$$
which is surjective by \cite[Proposition 28.12]{ThevenazBook}. The identity element of $D_R^{alg}(P)$ being the class of the trivial $P$-algebra $R$, it follows that the kernel of $d_R$ is isomorphic to $X(P)$ when $R=\cO$, whereas it is trivial when $R=k$. 
Now, reduction modulo $\mathfrak{p}$ also induces a group homomorphism 
$$\pi^{alg}_p:D^{alg}_\cO(P)\lra D^{alg}_k(P) \,, \qquad [A]\mapsto [A/\mathfrak{p}A]\,.$$ 
Because $\End_\cO(M)/\fp\!\End_\cO(M)\cong \End_k(M/\fp M)$ for any $\cO P$-lattice,  it follows that we have a commutative diagram with exact rows and columns:
\begin{center}
\begin{tikzcd}[column sep=huge]
 &  X(P)  \arrow[hook]{d} \arrow{r}  & 1\arrow[hook]{d} \\
X(P)   \arrow[hook]{r}  \arrow{d} & D_\cO(P) \arrow[two heads, "d_\cO"]{r}  \arrow{d}[swap]{\pi_p}  & D^{alg}_\cO(P) \arrow["\pi^{alg}_p"]{d} \\
1\arrow[hook]{r}    & D_k(P)  \arrow{r}{\cong}[swap]{d_k} &  D^{alg}_k(P)\\
\end{tikzcd} 
\end{center}
The injectivity of $\pi_p^{alg}$ follows from the commutativity of the bottom-right square because
$$\ker(\pi_p^{alg}d_\cO)=\ker(d_k\pi_p)=\ker(\pi_p)=X(P)$$
and its image under~$d_\cO$ yields $\ker(\pi_p^{alg})=d_\cO(X(P))=\{1\}$.
The surjectivity of $\pi_p$ implies that $\pi_p^{alg}$ is also surjective, hence an isomorphism, so that $D_k(P)\cong D_k^{alg}(P)\cong D_\cO^{alg}(P)$.
Therefore, finding a group-theoretic section of~$\pi_p$ is equivalent to finding a group-theoretic section of~$d_\cO$.
\end{rem}


\section{Determinant}

\noindent
Given an $\cO P$-lattice $L$, we may consider the composition of the underlying representation of $P$ with the determinant homomorphism $\det:\GL(L)\lra \cO^{\times}$. This is a linear character of $P$ and is called the \emph{determinant} of $L$. Given $g\in P$,  we write $\det(g,L)$ for the determinant of the action of~$g$ on~$L$. If $\det(g,L)=1$ for every $g\in P$, that is, if the determinant of $L$ is the trivial character, then we say that $L$ is an $\cO P$-lattice of determinant $1$.\par

\begin{lem}\label{lem:det}
Let $L$ and $N$ be $\cO P$-lattices of determinant $1$.
\begin{enumerate}
\item[\rm(a)] $L^{*}$ is an $\cO P$-lattice of determinant $1$.
\item[\rm(b)] $L\otimes_{\cO}N$ is an $\cO P$-lattice of determinant $1$.
\end{enumerate}
\end{lem}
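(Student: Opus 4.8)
The plan is to reduce both claims to standard determinant identities for matrices over the commutative ring~$\cO$, by fixing $\cO$-bases and writing the $P$-action in matrix form. So first choose an $\cO$-basis of~$L$ and let $\rho\colon P\to\GL_n(\cO)$ be the resulting matrix representation, where $n=\dim_\cO(L)$; the hypothesis on~$L$ says exactly that $\det\rho(g)=1$ for all $g\in P$. Similarly fix an $\cO$-basis of~$N$, giving a matrix representation $\sigma\colon P\to\GL_m(\cO)$ with $m=\dim_\cO(N)$ and $\det\sigma(g)=1$ for all $g$.

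Next, for part~(a): with respect to the dual basis of $L^{*}=\Hom_\cO(L,\cO)$, the action of $g\in P$ on $L^{*}$ is given by the contragredient matrix $(\rho(g)^{-1})^{\top}=(\rho(g)^{\top})^{-1}$. Since $\det(A^{\top})=\det(A)$ and $\det(A^{-1})=\det(A)^{-1}$, we obtain $\det(g,L^{*})=\det(\rho(g))^{-1}=1$ for every $g\in P$, which is the assertion.

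Then, for part~(b): with respect to the tensor-product basis $\{e_i\otimes f_j\}$ of $L\otimes_{\cO}N$, the action of $g$ is the Kronecker product $\rho(g)\otimes\sigma(g)$. One then invokes the identity $\det(A\otimes B)=(\det A)^{m}\,(\det B)^{n}$ for $A\in\GL_n(\cO)$ and $B\in\GL_m(\cO)$, which is valid over any commutative ring by a standard block-matrix computation (writing $A\otimes B=(A\otimes I_m)(I_n\otimes B)$ and evaluating the determinant of each factor from a block-diagonal form). This yields $\det(g,L\otimes_{\cO}N)=\det(\rho(g))^{m}\,\det(\sigma(g))^{n}=1$ for all $g\in P$, as required.

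I do not anticipate a genuine obstacle: the proof is essentially a bookkeeping exercise, and the only points deserving a moment's attention are the correct description of the $P$-action on $L^{*}$ and on $L\otimes_{\cO}N$ in matrix terms, together with the remark that the elementary determinant identities used hold over the commutative ring~$\cO$ and not merely over a field. A more conceptual alternative would phrase~(a) via the natural isomorphism $\det(L^{*})\cong\det(L)^{*}$ of one-dimensional $\cO P$-lattices, and~(b) via $\det(L\otimes_{\cO}N)\cong\det(L)^{\otimes m}\otimes\det(N)^{\otimes n}$, but the direct matrix argument is the shortest route.
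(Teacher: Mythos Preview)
Your proof is correct and follows essentially the same route as the paper: for~(a) the paper uses $\det(g,L^{*})=\det(g^{-1},L)=\det(g,L)^{-1}$, and for~(b) it invokes the identity $\det(g,L\otimes_{\cO}N)=\det(g,L)^{\dim_\cO N}\cdot\det(g,N)^{\dim_\cO L}$, which are exactly the determinant identities you derive after passing to matrices via a choice of basis. Your version simply makes the matrix computations explicit where the paper quotes the identities directly.
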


\begin{proof}
(a) Since the action of $g\in P$ on~$\varphi\in L^*$ is given by $(g{\cdot} \varphi)(x)=\varphi(g^{-1}x)$ for all $x\in L$, we have clearly
$$\det(g,L^*)=\det(g^{-1},L)=\det(g,L)^{-1} \,.$$
Since $L$ has determinant~1, so has $L^*$.\par

(b) The determinant of a tensor product satisfies the well-known property
$$\det(g,L\otimes_{\cO}N)=\det(g,L)^{\dim_{\cO} N}\cdot \det(g,N)^{\dim_{\cO} L} \,.$$
Since both determinants are~1, we obtain $\det(g,L\otimes_{\cO}N)=1$.
\end{proof}

\noindent 
Among the lifts of a strongly capped endo-permutation $kP$-module $M$, there always exists one which has determinant~1, by~\cite[Lemma~28.1]{ThevenazBook}, using our assumption that there are enough roots of unity in~$\cO$ and the fact that $\dim_k(M)$ is prime to~$p$ because $M$ is strongly capped.
This lift of determinant~1 is unique up to isomorphism and will be written~$\Phi_M$.

\begin{lem} \label{Phi}
Let $M$ and $N$ be strongly capped endo-permutation $kP$-modules. Then:
\begin{enumerate}[ \rm(a)]
\item $\Phi_M^* \cong \Phi_{M^*}$.
\item $\Phi_{M\otimes_k N}\cong \Phi_M\otimes_\cO \Phi_N$.
\item $\Phi_{M\otimes_k M^*}$ is a permutation $\cO P$-lattice lifting the permutation $kP$-module $M\otimes_k M^*$.
\end{enumerate}
\end{lem}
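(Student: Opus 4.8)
The plan is to read off all three parts from the defining property of the operator $\Phi_{(-)}$ recalled just before the statement: a strongly capped endo-permutation $\cO P$-lattice of determinant~$1$ lifting a given strongly capped endo-permutation $kP$-module is unique up to isomorphism. So in each case it suffices to exhibit a lattice that is strongly capped endo-permutation, has determinant~$1$, and reduces modulo~$\fp$ to the required module. Throughout I will use that reduction modulo~$\fp$ commutes with the relevant operations: for $\cO P$-lattices $\wh L,\wh L'$ one has $\wh L^{*}/\fp\wh L^{*}\cong(\wh L/\fp\wh L)^{*}$ and $(\wh L\otimes_\cO\wh L')/\fp(\wh L\otimes_\cO\wh L')\cong(\wh L/\fp\wh L)\otimes_k(\wh L'/\fp\wh L')$ (both immediate from $\cO$-freeness), together with $\End_\cO(\wh L)/\fp\End_\cO(\wh L)\cong\End_k(\wh L/\fp\wh L)$ as noted in Remark~\ref{rem:DadePalg}. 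I will also use that the class of strongly capped endo-permutation $RP$-lattices is closed under duals and tensor products (\cite[Lemma~5.4]{Las}, recalled in Section~2) and that $\Phi_M$ is itself a strongly capped endo-permutation $\cO P$-lattice.

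For~(a): the lattice $\Phi_M^{*}$ is strongly capped endo-permutation by the closure property, it has determinant~$1$ by Lemma~\ref{lem:det}(a), and it reduces to $(\Phi_M/\fp\Phi_M)^{*}\cong M^{*}$. By uniqueness of the determinant-$1$ lift, $\Phi_M^{*}\cong\Phi_{M^{*}}$. For~(b): note first that $M\otimes_k N$ is strongly capped endo-permutation (closure property), so that $\Phi_{M\otimes_k N}$ is defined; then $\Phi_M\otimes_\cO\Phi_N$ is strongly capped endo-permutation (closure property again), has determinant~$1$ by Lemma~\ref{lem:det}(b), and reduces to $M\otimes_k N$, so by uniqueness $\Phi_M\otimes_\cO\Phi_N\cong\Phi_{M\otimes_k N}$.

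For~(c): applying~(b) to the pair $(M,M^{*})$ and then~(a) gives
$$\Phi_{M\otimes_k M^{*}}\ \cong\ \Phi_M\otimes_\cO\Phi_{M^{*}}\ \cong\ \Phi_M\otimes_\cO\Phi_M^{*}\ \cong\ \End_\cO(\Phi_M),$$
the last isomorphism being the natural identification $\wh L\otimes_\cO\wh L^{*}\cong\End_\cO(\wh L)$. Since $\Phi_M$ is an endo-permutation $\cO P$-lattice, $\End_\cO(\Phi_M)$ is by definition a permutation $\cO P$-lattice, and it reduces to $\End_k(M)=M\otimes_k M^{*}$; this is exactly the assertion.

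I do not expect a genuine obstacle. The only points requiring attention are, in~(a) and~(b), checking that the candidate lattices remain in the class of strongly capped endo-permutation lattices — so that the uniqueness of $\Phi_{(-)}$ may be invoked — which is precisely where \cite[Lemma~5.4]{Las} enters, and checking that their reductions are the predicted modules, which is where $\cO$-freeness of lattices is used; part~(c) is then a formal consequence of~(a), (b), and the definition of an endo-permutation lattice.
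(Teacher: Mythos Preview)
Your proof is correct, and parts~(a) and~(b) coincide with the paper's argument essentially verbatim (you are just more explicit about the strongly-capped closure property and about reduction modulo~$\fp$ commuting with dual and tensor).

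For part~(c) you take a slightly different route. After reaching $\Phi_{M\otimes_k M^*}\cong\Phi_M\otimes_\cO\Phi_M^*$ via (a) and~(b), you identify this with $\End_\cO(\Phi_M)$ and conclude directly from the definition of ``endo-permutation'' that it is a permutation $\cO P$-lattice. The paper instead passes to the Dade group and computes $[\Phi_{M\otimes_k M^*}]=[\Phi_M]\cdot[\Phi_M^*]=[\cO]$, then uses that the trivial class consists of strongly capped permutation lattices. Your argument is marginally more elementary, as it bypasses the Dade-group formalism entirely; the paper's argument avoids the explicit identification $L\otimes_\cO L^*\cong\End_\cO(L)$ but relies on the description of the class~$[\cO]$. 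Both are equally short and valid.
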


\begin{proof}
\begin{enumerate}[ \rm(a)]
\item It is clear that $\Phi_M^*$ lifts~$M^*$.
Since $\Phi_M$ has determinant~1, so has $\Phi_M^*$ by Lemma~\ref{lem:det}, and therefore $\Phi_M^* \cong \Phi_{M^*}$.
\item $\Phi_M\otimes_\cO \Phi_N$ has determinant 1 by Lemma~\ref{lem:det} and is therefore isomorphic to~$\Phi_{M\otimes_k N}$.
\item $\Phi_{M\otimes_k M^*} \cong \Phi_M\otimes_\cO \Phi_{M^*}$ by~(b).
Using (a), it follows that
$$[\Phi_{M\otimes_k M^*}] = [\Phi_M\otimes_\cO \Phi_{M^*}] =  [\Phi_M] \cdot [\Phi_{M^*}] =  [\Phi_M] \cdot [\Phi_M^*] = [\cO] \,,$$
which is the class consisting of all strongly capped permutation $\cO P$-lattices.
Therefore $\Phi_{M\otimes_k M^*}$ is a permutation $\cO P$-lattice.
\end{enumerate}
\end{proof}

These properties of the determinant allow us to prove Theorem~\ref{main}(a) in the odd characteristic case.
It is briefly mentioned without proof at the end of ~\cite[Remark~29.6]{ThevenazBook} that the map $d_\cO:D_\cO(P)\lra D^{alg}_\cO(P)$ always has a group-theoretic section when $p>2$,  which is equivalent to Theorem~\ref{main}(a) thanks to Remark~\ref{rem:DadePalg}.
For completeness, we provide a proof of this result in terms of lifts of modules.

\begin{lem} \label{odd}
Suppose that $p$ is an odd prime.
\begin{enumerate}[ \rm(a)]
\item Any permutation $\cO P$-lattice has determinant~1.
\item Let $[L]\in D_\cO(P)$. If $\chap(L)$ has determinant~1, then any element of the class $[L]$ also has determinant~1.
\item Let $M_1$ and $M_2$ be two indecomposable endo-permutation $kP$-modules and let $N$ be the cap of $M_1\otimes_k M_2$.
Then the cap of $\Phi_{M_1}\otimes_\cO \Phi_{M_2}$ is isomorphic to $\Phi_N$.
\item The map
$$D_k(P) \longrightarrow D_\cO(P) \,,\qquad [M] \to [\Phi_M]$$
is a well-defined group homomorphism which is a section of~$\pi_p$.
\end{enumerate}
\end{lem}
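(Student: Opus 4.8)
The strategy is to isolate a single observation and derive everything from it: for every strongly capped endo-permutation $kP$-module~$L$ one has $\chap(\Phi_L)\cong\Phi_{\chap(L)}$, that is, the cap of the determinant-$1$ lift is the determinant-$1$ lift of the cap. Granting this, parts (c) and (d) are formal, while (a) and (b) are immediate. For part~(a): a permutation $\cO P$-lattice is $\cO X$ for a finite $P$-set~$X$, and for $g\in P$ the matrix of~$g$ in the basis~$X$ is the permutation matrix of~$g$ acting on~$X$, so $\det(g,\cO X)$ is the sign of that permutation; since $g$ has $p$-power order with $p$ odd, $g$ permutes~$X$ by disjoint cycles of odd length, hence by an even permutation, so $\det(g,\cO X)=1$. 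For part~(b): an arbitrary element of~$[L]$ is isomorphic to $\chap(L)\otimes_\cO\cO X$ with $\cO X$ a strongly capped permutation lattice, and it has determinant~$1$ by the hypothesis on~$\chap(L)$, by part~(a), and by Lemma~\ref{lem:det}(b).

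To prove the observation, write $L\cong N\otimes_k kX$ with $N=\chap(L)$ and $kX$ a strongly capped permutation $kP$-module. Then $\Phi_N\otimes_\cO\cO X$ reduces modulo~$\fp$ to $N\otimes_k kX\cong L$, and it has determinant~$1$ by Lemma~\ref{lem:det}(b) and part~(a); by the uniqueness of the determinant-$1$ lift it is isomorphic to~$\Phi_L$. Now $\Phi_N$ is a lift of the indecomposable module~$N$, hence is itself indecomposable (a nonzero direct summand of~$\Phi_N$ has nonzero reduction, so a splitting of~$\Phi_N$ would split~$N$), and it has vertex~$P$ because~$N$ does; so $\Phi_N$ equals its own cap. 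Since $\cO X$ has exactly one trivial direct summand, $\Phi_N\otimes_\cO\cO X\cong\Phi_N\oplus Y$ with every indecomposable summand of~$Y$ of vertex strictly smaller than~$P$, and therefore $\chap(\Phi_L)\cong\chap(\Phi_N\otimes_\cO\cO X)\cong\Phi_N=\Phi_{\chap(L)}$.

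Part~(c) follows: by Lemma~\ref{Phi}(b) we have $\Phi_{M_1}\otimes_\cO\Phi_{M_2}\cong\Phi_{M_1\otimes_k M_2}$, which is strongly capped, so applying the observation with $L=M_1\otimes_k M_2$ gives $\chap(\Phi_{M_1}\otimes_\cO\Phi_{M_2})\cong\chap(\Phi_L)\cong\Phi_{\chap(L)}=\Phi_N$, where $N=\chap(M_1\otimes_k M_2)$. For part~(d), define $D_k(P)\to D_\cO(P)$ by $[M]\mapsto[\Phi_M]$. It is well defined: if $[M]=[M']$ then $\chap(M)\cong\chap(M')$, so by the observation $\chap(\Phi_M)\cong\Phi_{\chap(M)}\cong\Phi_{\chap(M')}\cong\chap(\Phi_{M'})$, whence $[\Phi_M]=[\Phi_{M'}]$. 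It is a homomorphism, since $[\Phi_M]\cdot[\Phi_{M'}]=[\Phi_M\otimes_\cO\Phi_{M'}]=[\Phi_{M\otimes_k M'}]$ by Lemma~\ref{Phi}(b), and this is the image of $[M]\cdot[M']=[M\otimes_k M']$. And it is a section of~$\pi_p$, since $\pi_p([\Phi_M])=[\Phi_M/\fp\Phi_M]=[M]$.

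I do not expect a genuine obstacle here — this is exactly why the odd case is the easy one — but the two points one must not gloss over are that $\Phi_N$ really is indecomposable, so that it coincides with its own cap, and that the determinant-$1$ property is stable under the tensor products appearing above; both are settled in the argument for the observation, and the crucial input is part~(a). The substantive difficulty of the paper lies in the characteristic-$2$ case, where part~(a) fails and this entire line of reasoning breaks down.
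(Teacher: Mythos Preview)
Your proof is correct and follows essentially the same approach as the paper: parts~(a) and~(b) are argued identically, and parts~(c) and~(d) rest on the same ingredients (Lemma~\ref{lem:det}(b), part~(a), and uniqueness of the determinant-$1$ lift). Your explicit observation $\chap(\Phi_L)\cong\Phi_{\chap(L)}$ is precisely what the paper is using when it says that (c) ``follows from~(b)''; you have simply unpacked that step, and your version is arguably clearer.
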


\begin{proof}
\begin{enumerate}[ \rm(a)]
\item Let $L=\cO X$ be a permutation $\cO P$-lattice, where $X$ is a basis of~$L$ permuted under the action of~$P$.
For any $g\in P$, the permutation action of~$g$ on~$X$ decomposes as a product of cycles of odd length, because the order of $g$ is odd. 
Any such cycle is an even permutation, so the determinant of the action of~$g$ on~$L$ is~1.\par
\item By the definition of the Dade group, an arbitrary element of the class $[L]$ has the form $\chap(L)\otimes_{\cO} \cO X$ where $\cO X$ is a strongly capped permutation $\cO P$-lattice.
Since both $L$ and $\cO X$ have determinant~1, so has their tensor product by Lemma~\ref{lem:det}(b).\par
\item Again by Lemma~\ref{lem:det}(b), the determinant~1 is preserved by tensor product. Hence the claim follows from (b).
\item This follows from (b) and (c).
\end{enumerate}
\end{proof}

From now on, unless otherwise stated, we assume that $P$ is a finite 2-group and $k$ has characteristic~2. 
It turns out that Lemma~\ref{odd} fails when $p=2$ in general.
It is clear that a (strongly capped) permutation $kP$-module always lifts in a unique way to a (strongly capped) permutation $\cO P$-lattice.
However, we emphasise that this lift may be different from the lift of determinant~1. 
It follows that two strongly capped endo-permutation $\cO P$-lattices in the same class in $D_\cO(P)$ need not have the same determinant. The problem is made clear through the following two results.

\begin{lem} \label{odd-permutation}
Let $g\in P\setminus\{1\}$ and let $C=\;<g>$.
\begin{enumerate}[ \rm(a)]
\item If $C=P$, then $\det(g,\cO P)=-1$.
\item If $C<P$, then $\det(g,\cO P)=1$.
\end{enumerate}
\end{lem}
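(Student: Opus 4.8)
The plan is to compute $\det(g,\cO P)$ directly from the cycle structure of the permutation of the standard basis induced by~$g$. First I would note that, for the left $\cO P$-module structure on the regular lattice $\cO P$, the element $g$ permutes the basis~$P$ by $\sigma_g\colon h\mapsto gh$, so that $\det(g,\cO P)$ equals the sign of~$\sigma_g$. The $\sigma_g$-orbits on~$P$ are precisely the sets $Ch$ with $h$ running over a transversal of $C$ in $P$; there are $|P:C|$ of them and each has cardinality $n:=|C|=\operatorname{ord}(g)$. Hence $\sigma_g$ is a product of $|P:C|$ disjoint $n$-cycles and
$$\det(g,\cO P)=\bigl((-1)^{n-1}\bigr)^{|P:C|}\,.$$

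Equivalently --- and this is the form I would actually write down --- one can restrict to $C=\langle g\rangle$: since $P$ decomposes into $|P:C|$ free $C$-orbits under left multiplication, we obtain $\cO P\resgh{P}{C}\cong(\cO C)^{|P:C|}$ as $\cO C$-lattices, whence $\det(g,\cO P)=\det(g,\cO C)^{|P:C|}$; and on $\cO C$ the element $g$ cyclically permutes the basis $1,g,\dots,g^{n-1}$, so $\det(g,\cO C)=(-1)^{n-1}$.

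It then remains only to feed in that $P$ is a $2$-group. Since $g\neq 1$, the integer $n=|C|$ is a power of~$2$ with $n\geq 2$, hence $n$ is even and $\det(g,\cO C)=(-1)^{n-1}=-1$. In case~(a), $C=P$ forces $|P:C|=1$, so $\det(g,\cO P)=-1$. In case~(b), $C<P$ forces $|P:C|=|P|/|C|$ to be a power of~$2$ strictly greater than~$1$, hence even, so $\det(g,\cO P)=(-1)^{|P:C|}=1$.

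I do not expect any genuine obstacle here: the statement reduces to an elementary sign computation once the permutation-matrix description of the determinant is in place. The only points requiring a modicum of care are the parity bookkeeping in case~(b) (namely that $n-1$ is odd while $|P:C|$ is even) and the routine identification $\cO P\resgh{P}{C}\cong(\cO C)^{|P:C|}$, which is just the orbit decomposition of~$P$ under left multiplication by~$C$.
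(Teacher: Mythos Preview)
Your proof is correct and follows essentially the same approach as the paper: both arguments compute $\det(g,\cO C)$ as the sign of a single cycle of even length (hence $-1$), and both handle part~(b) by using the restriction isomorphism $\cO P\resgh{P}{C}\cong(\cO C)^{|P:C|}$ together with the observation that $|P:C|$ is even when $C<P$. The only cosmetic difference is that you derive the unified formula $\det(g,\cO P)=\bigl((-1)^{n-1}\bigr)^{|P:C|}$ first and then specialise, whereas the paper treats~(a) directly and then invokes~(a) inside the proof of~(b).
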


\begin{proof}
\begin{enumerate}[ \rm(a)]
\item Since $P=\;<g>$, the action by permutation of~$g$ on~$P$ is given by a cycle of even length, hence an odd permutation.
Therefore $\det(g,\cO P)=-1$.
\item  Viewed by restriction as an $\cO C$-lattice, $\cO P$ is isomorphic to a direct sum of $|P:C|$ copies of~$\cO C$.
Since $C<P$, the index $|P:C|$ is even.
Therefore, using~(a), we obtain
$$\det(g,\cO P)=\det(g,(\cO C)^{|P:C|})=\det(g,\cO C)^{|P:C|}=(-1)^{|P:C|}=1 \,,$$
as was to be proved.
\end{enumerate}
\end{proof}

\begin{cor} \label{omega}
Let $g\in P\setminus\{1\}$ and let $C=\;<g>$.
\begin{enumerate}[ \rm(a)]
\item If $C=P$, then $\det(g,\Omega^1(\cO))=-1$.
\item If $C<P$, then $\det(g,\Omega^m(\cO))=1$, for any $m\in\IZ$.
\end{enumerate}
\end{cor}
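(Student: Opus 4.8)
The plan is to express each $\Omega^m(\cO)$ through short exact sequences of $\cO P$-lattices involving only the trivial lattice~$\cO$ and free $\cO P$-lattices, and then feed these sequences into Lemma~\ref{odd-permutation}. The basic tool is the multiplicativity of the determinant along short exact sequences: if $0\to A\to B\to C\to 0$ is a short exact sequence of $\cO P$-lattices, it splits as a sequence of $\cO$-modules (because $C$ is free over~$\cO$), so with respect to an $\cO$-basis of~$B$ adapted to~$A$ the action of any $g\in P$ on~$B$ is given by a block upper-triangular matrix whose diagonal blocks are the actions of~$g$ on~$A$ and on~$C$; hence $\det(g,B)=\det(g,A)\cdot\det(g,C)$. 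I would record this elementary fact first.

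Next I would treat the case $m=1$. By definition $\Omega^1(\cO)$ is the kernel of the augmentation map $\cO P\to\cO$, so there is a short exact sequence $0\to\Omega^1(\cO)\to\cO P\to\cO\to 0$, and the multiplicativity above gives $\det(g,\Omega^1(\cO))=\det(g,\cO P)\cdot\det(g,\cO)^{-1}=\det(g,\cO P)$. Part~(a) is then immediate from Lemma~\ref{odd-permutation}(a), and the case $m=1$ of part~(b) from Lemma~\ref{odd-permutation}(b).

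For the remaining cases of~(b), note that by Lemma~\ref{odd-permutation}(b) every free $\cO P$-lattice has determinant~$1$ at~$g$, being a direct sum of copies of~$\cO P$. Starting from $\det(g,\Omega^0(\cO))=\det(g,\cO)=1$ and using the short exact sequences $0\to\Omega^{m+1}(\cO)\to F_m\to\Omega^m(\cO)\to 0$ arising from a minimal projective resolution of~$\cO$ (each $F_m$ being free, since projective $\cO P$-lattices are free), an induction on $m\geq 0$ yields $\det(g,\Omega^m(\cO))=1$. Finally, for $m<0$ one invokes $\Omega^m(\cO)\cong\bigl(\Omega^{-m}(\cO)\bigr)^{*}$ together with $\det(g,L^{*})=\det(g,L)^{-1}$ (as established in the proof of Lemma~\ref{lem:det}(a)) to transfer the conclusion from positive to negative~$m$.

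The point to watch is that one cannot carry out this argument inside the Dade group: when $p=2$ the determinant is not constant on an equivalence class of endo-permutation lattices, since a strongly capped permutation lattice may have determinant~$-1$ (indeed $\det(g,\cO P)=-1$ when $P=\langle g\rangle$, by part~(a)). The whole argument must therefore be performed at the level of honest $\cO P$-lattices and honest short exact sequences, which is exactly what the multiplicativity of the determinant along exact sequences makes possible. Beyond that there is no real obstacle: all the substance is already contained in Lemma~\ref{odd-permutation}.
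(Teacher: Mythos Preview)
Your proposal is correct and follows essentially the same route as the paper: the augmentation sequence for $m=1$, induction along the free resolution for $m>0$, and duality for $m<0$, all resting on Lemma~\ref{odd-permutation}. Your additional justification of the multiplicativity of the determinant along short exact sequences and your closing remark about why the argument must be done with actual lattices rather than Dade-group classes are useful clarifications, but the mathematical content is the same.
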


\begin{proof}
\begin{enumerate}[ \rm(a)]
\item From the short exact sequence $0\to \Omega^1(\cO) \to \cO P \to \cO \to 0$, we obtain
$$\det(g,\Omega^1(\cO)) \det(g,\cO) = \det(g,\cO P) =-1 \,,$$
by Lemma~\ref{odd-permutation}. Since $\det(g,\cO) =1$, the result follows.
\item Recall that $\Omega^{-m}(\cO)\cong \Omega^m(\cO)^*$.
By Lemma~\ref{lem:det}, passing to the dual preserves the property that the determinant is~1. 
Therefore we may assume that $m> 0$ and we proceed by induction on $m$. 
Since $P$ is a 2-group, every projective $\cO P$-lattice is free, so there is a short  exact sequence of the form 
$$0\longrightarrow \Omega^m(\cO) \longrightarrow (\cO P)^r \longrightarrow \Omega^{m-1}(\cO) \longrightarrow 0$$
for some integer~$r$.
It follows that
$$\det(g,\Omega^m(\cO))\det(g,\Omega^{m-1}(\cO)) = \det(g,(\cO P)^r)=\det(g,\cO P)^r=1 \,,$$
by Lemma~\ref{odd-permutation}. 
Starting from the obvious equality $\det(g,\cO)=1$, we conclude by induction that $\det(g,\Omega^m(\cO))=1$.
\end{enumerate}
\end{proof}

If $P=C_{2^n}$, it should be noted that there are two natural lifts for~$\Omega^1(k)=\Omega^1_{C_{2^n}/1}(k)$.
One of them is $\Omega^1(\cO)$, but it does not have determinant~1, by Corollary~\ref{omega}.
The other one is $\Phi_{\Omega^1(k)}$, which turns out to be isomorphic to $\cO^- \otimes_\cO \Omega^1(\cO)$,
where $\cO^-$ denotes the one-dimensional module with the generator of~$C_{2^n}$ acting by~$-1$.
Corollary~\ref{omega} together with an induction on~$n$ show that the same holds for the other generators, namely $\Phi_{\Omega^1_{C_{2^n}/Q}(k)}\cong \cO^- \otimes_\cO \Omega^1_{C_{2^n}/Q}(\cO)$ for every $Q\leq C_{2^n}$ of index at least~4.

\vspace{4mm}
\section{Lifting from characteristic~2}

\noindent Our aim is to construct a section for $\pi_p$ using a set of generators for the Dade group $D_k(P)$.
Since the assignment $M\mapsto \Phi_M$ has a good multiplicative behaviour by Lemma~\ref{Phi}(b), one might expect that the multiplicative order in the Dade group is preserved, but this is in fact not at all straightforward. If $[M]$ has order~$n$ in $D_k(P)$, then $M^{\otimes n}$ is isomorphic to a permutation module~$kX$, where $X$ is a basis of~$kX$ permuted by the action of~$P$. Then we obtain
$$(\Phi_M)^{\otimes n} \cong \Phi_{M^{\otimes n}} \cong \Phi_{kX} \,,$$
but $\Phi_{kX}$ may not be a permutation $\cO P$-lattice. The only obvious thing is that $[\Phi_{kX}]$ lies in $\ker(\pi_p) \cong X(P)$.
However, we now show that a much better result holds when the order~$n$ is a power of~2.

\begin{lem} \label{order2}
Let $M$ be a strongly capped endo-permutation $kP$-module. 
\begin{enumerate}[ \rm(a)]
\item If $[M]$ has order~2 in~$D_k(P)$, then $[\Phi_M]$ has order~2 in~$D_\cO(P)$.
\item If $[M]$ has order~4 in~$D_k(P)$, then $[\Phi_M]$ has order~4 in~$D_\cO(P)$.
\end{enumerate}
\end{lem}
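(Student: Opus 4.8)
The plan is to prove part~(a) in full and then deduce part~(b) from it by a short formal argument. The engine behind~(a) will be the following self-duality principle, which I would establish first: \emph{if $L$ is a strongly capped endo-permutation $kP$-module with $L\cong L^*$, then $[\Phi_L]^2=[\cO]$ in $D_\cO(P)$.} Indeed, uniqueness of the determinant-$1$ lift together with Lemma~\ref{Phi}(a) gives $\Phi_L^*\cong\Phi_{L^*}\cong\Phi_L$, so $\Phi_L\otimes_\cO\Phi_L\cong\Phi_L\otimes_\cO\Phi_L^*\cong\Phi_{L\otimes_k L^*}$ by Lemma~\ref{Phi}(b), and the latter is a permutation $\cO P$-lattice by Lemma~\ref{Phi}(c); hence its class in $D_\cO(P)$ is trivial.

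For~(a), assume $[M]$ has order~$2$, so that $[M]=[M]^{-1}=[M^*]$ and therefore $\chap(M)\cong\chap(M^*)$. Since duality preserves vertices we have $\chap(M^*)\cong\chap(M)^*$, so $N:=\chap(M)$ satisfies $N\cong N^*$. Using the standard description of a Dade class, I would write $M\cong N\otimes_k kY$ for some strongly capped permutation $kP$-module $kY$, which is self-dual like every permutation module. The self-duality principle applied to $N$ and to $kY$ yields $[\Phi_N]^2=[\Phi_{kY}]^2=[\cO]$; and since $\Phi_M\cong\Phi_N\otimes_\cO\Phi_{kY}$ by Lemma~\ref{Phi}(b) and $D_\cO(P)$ is abelian, we get $[\Phi_M]^2=[\Phi_N]^2\,[\Phi_{kY}]^2=[\cO]$. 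As $\pi_p([\Phi_M])=[M]$ has order~$2$ we have $[\Phi_M]\neq[\cO]$, so $[\Phi_M]$ has order exactly~$2$.

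Part~(b) will then follow by applying part~(a) to $M\otimes_k M$: this lattice is again strongly capped endo-permutation, and its class $[M]^2$ has order~$2$ in $D_k(P)$, so $[\Phi_{M\otimes_k M}]$ has order~$2$ by~(a). Since $\Phi_{M\otimes_k M}\cong\Phi_M\otimes_\cO\Phi_M$ by Lemma~\ref{Phi}(b), this says $[\Phi_M]^2$ has order~$2$, hence $[\Phi_M]^4=[\cO]$; on the other hand $\pi_p([\Phi_M])=[M]$ has order~$4$, which forces the order of $[\Phi_M]$ to be divisible by~$4$, so it equals~$4$.

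The subtle point — and the reason a naive argument does not work — is that $M\mapsto[\Phi_M]$ does \emph{not} descend to $D_k(P)$, since by Lemma~\ref{odd-permutation} and Corollary~\ref{omega} the determinant-$1$ lift of a strongly capped permutation $kP$-module need not be a permutation $\cO P$-lattice. Thus replacing $M$ by its cap introduces exactly one correction factor $[\Phi_{kY}]$, a priori an element of $\ker(\pi_p)=X(P)$ of unknown order; the heart of the proof is the observation that $kY$ is self-dual, which forces that single factor to square to $[\cO]$. A coarser manipulation, such as writing $M\otimes_k kS\cong M^*\otimes_k kT$ directly, would introduce two correction factors and would only give order dividing~$4$.
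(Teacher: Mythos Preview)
Your proof is correct and follows essentially the same strategy as the paper: for (a), use self-duality to conclude $\Phi_M\cong\Phi_M^*$ and hence $[\Phi_M]^2=[\cO]$; for (b), apply (a) to $M\otimes_k M$. The only difference is that you take a small detour: having established $N:=\chap(M)\cong N^*$ and $kY\cong (kY)^*$, you apply your self-duality principle separately to $N$ and to $kY$ and then multiply. But these two isomorphisms already give $M\cong N\otimes_k kY\cong N^*\otimes_k (kY)^*\cong M^*$, so you may apply the principle directly to $M$ itself, as the paper does; the correction factor $[\Phi_{kY}]$ you worry about never actually needs to be isolated.
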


\begin{proof}
\begin{enumerate}[ \rm(a)]
\item Since $[M]$ has order~2 in~$D_k(P)$, we have $M\cong M^*$. By Lemma~\ref{Phi}(a), we obtain $\Phi_M\cong \Phi_{M^*}\cong \Phi_M^*$. 
Hence $[\Phi_M]=[\Phi_M]^{-1}$ in~$D_\cO(P)$ and the result follows.
\item Let $N=M\otimes_k M$. Since $[M]$ has order~4 in~$D_k(P)$, $[N]$ has order~2 and so $[\Phi_N]$ has order~2 in~$D_\cO(P)$ by~(a).
Therefore, using Lemma~\ref{Phi}, we obtain
$$[\Phi_M]^4 = [(\Phi_M)^{\otimes 4}] = [\Phi_{M^{\otimes 4}}] = [\Phi_{N^{\otimes 2}}] = [\Phi_N\otimes_\cO \Phi_N] = [\Phi_N]^2 = 1 \,,$$
as required.
\end{enumerate}
\end{proof}

We can now prove Theorem~\ref{main} in the general case.

\begin{proof}[Proof of Theorem~\ref{main}]{\ }
\begin{enumerate}[ \rm(a)]
\item First of all, the case $p\geq 3$ is proved in Lemma~\ref{odd}. Hence, we may assume that $p=2$.
We rely on the fact that
$$D_k(P)\cong (\IZ/2\IZ)^a\times (\IZ/4\IZ)^b\times \IZ^c$$
for some non-negative integers $a,b,c$ (see \cite[Theorem~14.1]{ThevenazTour}).
This follows either from Bouc's classification of endo-permutation modules \cite[Section 8]{Bouc}, or from the detection theorems of \cite{CaTh05}
which do not depend on the full classification. In any case, the result uses a reduction to the cases of cyclic 2-groups, semi-dihedral 2-groups and generalised quaternion 2-groups, and in these cases the torsion subgroup of the Dade group over~$k$ only contains elements of order 2 or~4 (see \cite{Ca-Th}).\\
Now we choose a generator for each factor $\IZ/2\IZ$, $\IZ/4\IZ$, or $\IZ$. The class of each of them can be lifted to an element of~$D_\cO(P)$ of the same order, by Lemma~\ref{order2}.
This procedure for the generators then extends obviously to a group homomorphism $D_k(P) \to D_\cO(P)$ which is a group-theoretic section of the surjection~$\pi_p$.
\item  By Lemma~\ref{lem:one-dim}, $\Ker(\pi_p)$ is isomorphic to the group $X(P)$ of all one-dimensional $\cO P$-lattices.
Thus the isomorphism $D_\cO(P)\cong X(P)\times D_k(P)$ follows from~(a).
\end{enumerate}
\end{proof}

\begin{rem}[Non-uniqueness of the section]
It is clear that the group-theoretic section of $\pi_p$ obtained through Lemma~\ref{odd} in odd characteristic and in the proof of Theorem~\ref{main} in characteristic~2 is not unique in general, because it suffices to send generators of $D_k(P)$ to lifts of the same order in $D_{\cO}(P)$.  Thus the number of group-theoretic sections of $\pi_p$  depends on the structure of the kernel $\Ker(\pi_p)\cong X(P)$.
However,  in case $p$ is odd, we note that the section is uniquely determined on the torsion part $D^t_k(P)$ of~$D_k(P)$, because  $D^t_k(P)$ is a $2$-group and $X(P)$ a $p$-group.
\end{rem}


\bigskip\bigskip\bigskip
\noindent\textbf{Acknowledgments.}
This paper is based upon work supported by the National Science Foundation under Grant No.~DMS1440140
while the first author was in residence at the Mathematical Sciences Research Institute in Berkeley,
California, during the Spring 2018 semester. The first author also acknowledges funding  by the DFG SFB TRR 195.
\par
The authors thank the referee for pointing out an error in an earlier version of this paper.

\bigskip\bigskip\bigskip



\end{document}